
\documentclass[preprint,12pt]{elsarticle}
\usepackage{latexsym}
\usepackage{amssymb}
\usepackage{amsmath}
\usepackage{amsthm}
\usepackage{verbatim}

\numberwithin{equation}{section}

\newtheorem{theorem}{Theorem}[section]
\newtheorem{lemma}{Lemma}[section]
\newtheorem{proposition}{Proposition}[section]

\begin{document}

\begin{frontmatter}



\title{Global Stability of Traveling Wave Fronts for a Population Dynamics Model with  Quiescent Stage and Delay
}


\author[ad1]{Yonghui Zhou}
\ead{zhouyh318@nenu.edu.cn}
\author[ad1,ad2]{Shuguan Ji\corref{cor}}
\ead{jishuguan@hotmail.com}
\address[ad1]{School of Mathematics and Statistics and Center for Mathematics and Interdisciplinary Sciences, Northeast Normal University, Changchun 130024, P.R. China}
\address[ad2]{School of Mathematics, Jilin University, Changchun 130012, P.R. China}
\cortext[cor]{Corresponding author.}

\begin{abstract}
This paper is concerned with the globally exponential stability of traveling wave fronts for a class of population dynamics model with quiescent stage and delay. First, we establish the comparison principle of solutions for the population dynamics model. Then, by the weighted energy method combining comparison principle, the globally exponential stability of traveling wave fronts of the population dynamics model under the quasi-monotonicity conditions is established.
\end{abstract}

\begin{keyword}
Stability, traveling wave fronts, weighted energy method, comparison principle.
\end{keyword}

\end{frontmatter}


\section{Introduction}

In this paper, we investigate the globally exponential stability of traveling wave fronts for the following system
\begin{equation}
\label{101}
\begin{cases}
u_{1t}(t, x)=D[J\ast u_{1}(t, x)-u_{1}(t, x)]+f\big(u_{1}(t, x), u_{1}(t-\tau,x)\big)
\\
\ \ \ \ \ \ \ \ \ \ \ \ \ \
-\gamma_{1}u_{1}(t, x)+\gamma_{2}u_{2}(t, x), \\
u_{2t}(t, x)=\gamma_{1}u_{1}(t, x)-\gamma_{2}u_{2}(t, x)
\end{cases}
\end{equation}
with the initial data
\begin{equation}
\begin{cases}
u_{1}(s, x)=u_{10}(s, x),\ (s, x)\in[-\tau,0]\times\mathbb{R}, \\
u_{2}(0, x)=u_{20}(x),\ x\in\mathbb{R},
\label{102}
\end{cases}
\end{equation}
where $u_{1}(t,x)$ and $u_{2}(t,x)$ denote the densities of mobile and stationary subpopulations, respectively, $\gamma_{1}>0$ is the rate of switching from a mobile state to stationary state and $\gamma_{2}>0$ is the rate of switching from a stationary state to mobile state,
$\tau\geq 0$\ denotes the time delay, $J(y)$ is a continuous nonnegative kernel function satisfying $J(-y)=J(y)$, $\int_{\mathbb{R}}J(y)dy = 1$ and $\int_{\mathbb{R}}e^{-\lambda y}J(y)dy<\infty$ for all $\lambda > 0$, $f$ is the reproduction function that satisfies the following assumptions
\begin{description}
\item[(A1)] $f(0,0)=f(K,K)=0,\ f\in C^{2}([0,K]^{2},\mathbb{R}),\ f(u,u)>0$\ for\ $u\in (0,K),$\ and\ $\partial_{2}f(u,v)\geq 0$\ for\ $(u,v)\in[0,K]^{2}$,\ $\partial_{ij}f(u,v)\leq 0$ ($i, j=1, 2$) for $(u,v)\in[0,K]^{2}$ where\ $K$\ is a positive constant;
\item[(A2)] $\partial_{1}f(0,0)u+\partial_{2}f(0,0)v\geq f(u,v)$\ for\ $(u,v)\in[0,K]^{2},\  \partial_{1}f(K,K)+\partial_{2}f(K,K)<0$.
\end{description}
By (A1)-(A2),
if $\partial_{1}f(K,K)+\partial_{2}f(K,K)<\gamma_{1}-3\gamma_{2}$, then there exists a positive constant $\beta$ such that
\begin{equation}
D\int_{-\infty}^{0}J(y)e^{-\beta y}dy<\frac{D}{2}+\gamma_{1}-\partial_{1}f(K,K)-\partial_{2}f(K,K)-3\gamma_{2}.
\label{104}
\end{equation}
The system \eqref{101} could
represent a model for a population where individuals migrate and reproduce
and are subject to randomly occurring inactive phases. A special case of system \eqref{101} is the following delayed diffusive Nicholson's blowflies equation with a quiescent stage
\begin{eqnarray*}
\begin{cases}
u_{1t}(t, x)=D[J\ast u_{1}(t, x)-u_{1}(t, x)]-d(u_{1}(t, x))+ b(u_{1}(t-\tau,x))e^{-\mu_{0}\tau}\\
\ \ \ \ \ \ \ \ \ \ \ \ \ \ -\gamma_{1}u_{1}(t, x)+\gamma_{2}u_{2}(t, x), \\
u_{2t}(t, x)=\gamma_{1}u_{1}(t, x)-\gamma_{2}u_{2}(t, x),
\end{cases}
\end{eqnarray*}
where $u_{1}(t,x)$ and $u_{2}(t,x)$ are the densities of mobile and stationary subpopulations of the mature blowflies at the time $t$ and point $x$, respectively, $d(u)$ is the death rate function, $b(u)$ is the birth rate function,
$\mu_{0}$ is the death rate of the juvenile, the delay $\tau\geq 0$ is the duration of the juvenile state, $e^{-\mu_{0}\tau}$ is the survival rate of the juvenile(see, e.g. \cite{Bocharov,Hadeler2009} for detailed interpretation).

The traveling waves for \eqref{101} connecting two constant states $u_{-}=(0,0)$ and $u_{+}=\big(K,\frac{\gamma_{1}K}{\gamma_{2}}\big)$  at far fields are the special solutions to \eqref{101} in the form of $u(t,x)=\phi(x+ct)$, namely,
\begin{equation}
\begin{cases}
c\phi_{1}'(\xi)=D[J\ast\phi_{1}(\xi)-\phi_{1}(\xi)]+f\big(\phi_{1}(\xi), \phi_{1}(\xi-c\tau)\big)-\gamma_{1}\phi_{1}(\xi)+\gamma_{2}\phi_{2}(\xi), \\
c\phi_{2}'(\xi)=\gamma_{1}\phi_{1}(\xi)-\gamma_{2}\phi_{2}(\xi)
\end{cases}
\label{105}
\end{equation}
with the asymptotic boundary condition
\begin{equation}
(\phi_{1}(-\infty),\phi_{2}(-\infty))=(0,0)\ and\ (\phi_{1}(+\infty),\phi_{2}(+\infty))=\Big(K,\frac{\gamma_{1}K}{\gamma_{2}}\Big).
\label{106}
\end{equation}
It is obvious that, by (A1)-(A2) and $\phi_{1}(+\infty)=K$,
there exists a sufficiently large number $\xi_{0}$ such that for $\xi\geq \xi_{0}$,
\begin{equation}
\partial_{i}f(\phi_{1}(\xi),\phi_{1}(\xi-c\tau))<\frac{\partial_{i}f(K,K)+\gamma_{2}}{2},\ \ i=1,2.
\label{103}
\end{equation}

Due to the important role in biology, epidemiology and population dynamics, the traveling wave solutions of the reaction diffusion equations were widely studied \cite{Aronson,Bocharov,Chen,Mei6,Fife1977,Guo2018,
Hadeler2009,huang2012,Lin0,Lin1,Mei1,Mei2,Mei4,Mei5,Sattinger,
Schaaf1987,Volpert1990,Volpert1997,Wang1,Wang2,Lv2015,Wang2011,Yang2018,Yu2013,Yu2017,Zhang2018,Zhou2013}. In \cite{Zhou2013}, by using Schauder's fixed point theorem and upper-lower solution method, Zhou et al. established the existence of traveling wave fronts for system \eqref{101}.
Besides, the stability of the traveling wave solutions is an important issue in the traveling wave theory, especially the stability of traveling wave solutions of nonlocal dispersal equations.
For traveling wave fronts, the frequently used methods are squeezing technique \cite{Fife1977,Volpert1990}, the technical weighted energy method \cite{Mei1,Mei2}, the weighted energy method combining comparison principle \cite{Guo2018,Mei4,Mei5,Lv2015,Zhang2018}, the weighted energy method combining Fourier transform \cite{Wang2011} and spectral analysis method \cite{Sattinger,Schaaf1987,Volpert1997}. Among the above methods, the most classic method is the weighted energy method combined with the comparison principle, which was developed by Mei et al. \cite{Mei4,Mei5}. For example, by using the weighted energy method combining comparison principle, Mei et al. \cite{Mei4} obtained the global stability of the traveling wave fronts for time delayed reaction diffusion equation with local nonlinearity; Wang and Lv \cite{Lv2015} continued to use this method to establish stability of traveling wave fronts for nonlocal reaction diffusion equation with delay.

Although the stability of traveling wave fronts for delayed reaction diffusion equations and nonlocal dispersal equation has been studied intensively, there are few
results about the traveling wave fronts for nonlocal dispersal systems, see only \cite{Guo2018,Zhang2018}. So far, however, there is no result on stability of
travelling wave solutions for system \eqref{101}.
Inspired by above works, we investigate the global stability of traveling wave fronts for a class of nonlocal dispersal system \eqref{101} with delay.

\noindent

The rest of this paper is organized as follows. In Section 2, we
introduce some preliminaries and state our main results. In
Section 3, we prove our stability theorem. In the appendix, we prove the key inequalities used in Section 3.

\textbf{Notations.} Throughout this paper, $C>0$ denotes a generic
constant, $C_{i}>0(i=1,2,...)$ represents a specific constant. Let
$I$ be an interval. $L^{2}(I)$ is the space of the square integrable
functions defined on $I$, and $H^{k}(I)(k\geq 0)$ is the Sobolev
space of the $L^{2}-$function $h(x)$ defined on the interval $I$
whose derivatives $\frac{d^{i}}{dx^{i}}h(i=1,2,...,k)$ also belong
to $L^{2}(I).\ L^{2}_{w}(I)$ denotes the weighted $L^{2}-$space
with a weight function $w(x)>0$ and its norm is defined by
$
\|h\|_{L^{2}_{w}}=\left(\int_{I}w(x)\left|h(x)\right|^{2}dx\right)^{\frac{1}{2}},
H^{k}_{w}(I)$ is the weighted Sobolev space with the norm given by
\begin{equation*}
\|h\|_{H^{k}_{w}}=\left(\sum_{i=0}^{k}\int_{I}w(x)\left|\frac{d^{i}}
{dx^{i}}h(x)\right|^{2}dx\right)^{\frac{1}{2}}.
\end{equation*}
Let $T>0$ be a number and $\mathcal {B}$ be a Banach space. $C([0,T];\mathcal {B})$ is the space of $\mathcal
{B}-$valued continuous functions on $[0,T].$ $L^{2}([0,T];\mathcal
{B})$ is the space of $\mathcal {B}-$valued $L^{2}-$functions on
$[0,T]$. The corresponding space of $\mathcal {B}-$valued functions
on $[0,\infty)$ is defined similarly.
\section{Preliminaries and main result}

\setcounter{equation}{0}

\label{sec:2}

Define the weight function
\begin{equation}
w(\xi)=
\begin{cases}
e^{-\beta(\xi-\xi_{0})},\   \  \ \ \ \xi\leq\xi_{0},\\
1,\ \ \ \ \ \ \ \ \ \ \ \ \ \ \ \xi>\xi_{0},
\end{cases}
\label{201}
\end{equation}
where $\beta$ and $\xi_{0}$ are given in \eqref{104} and \eqref{103}, respectively.

Next, we will state our stability theorem.
\begin{theorem}[Stability]
\label{thm201}  Assume (A1)-(A2) hold. For any given traveling wave fronts $\Phi(x+ct)=(\phi_{1}(x+ct),\phi_{2}(x+ct))$ with a speed
\begin{equation*}
c>\max\Big\{c_{*},\frac{\gamma_{1}-\gamma_{2}}{\beta},
\frac{2\partial_{1}f(0,0)+2\partial_{2}f(0,0)+\gamma_{2}-\gamma_{1}-\frac{D}{2}+D\int_{\mathbb{R}}J(y)e^{-\beta y}dy}{\beta}\Big\}.
\end{equation*}
If the initial data satisfies
\begin{equation*}
\begin{cases}
u_{1-}\leq u_{10}(s,x)\leq u_{1+},\ \ \ (s,x)\in[-\tau,0]\times\mathbb{R},\\
u_{2-}\leq u_{20}(x)\leq u_{2+},\ \ \ \ \ \ x\in\mathbb{R}
\end{cases}
\end{equation*}
and the initial perturbation is
\begin{equation*}
\begin{cases}
u_{10}(s,x)-\phi_{1}(x+cs)\in C([-\tau,0];H_{w}^{1}(\mathbb{R})),\\
u_{20}(x)-\phi_{2}(x)\in H_{w}^{1}(\mathbb{R})\subset C(\mathbb{R}),
\end{cases}
\end{equation*}
then the solution of the Cauchy problem\eqref{101}-\eqref{102} satisfies
$$u_{i}(t,x)-\phi_{i}(x+cs)\in C([0,\infty);H_{w}^{1}(\mathbb{R})),\ \ i=1,2,$$
$$u_{i-}\leq u_{i}(t,x)\leq u_{i+},\ \ (t,x)\in\mathbb{R}_{+}\times\mathbb{R},\ i=1,2,$$
and
$$||(u_{i}-\phi_{i})(t)||_{H_{w}^{1}(\mathbb{R})}\leq Ce^{-\mu t},\ \ t\geq 0,\ \ i=1,2$$
for any constant $0<\mu<\min\{\mu_{1},\mu_{2}\}$, where $\mu_{1}$ and $\mu_{2}$ are defined in Lemma \ref{lemma2}.

In particular, the solution $(u_{1}(t,x), u_{2}(t,x))$ converges to the traveling wave fronts $(\phi_{1}(x+ct),\phi_{2}(x+ct))$ exponentially in time, namely,
$$\sup_{x\in\mathbb{R}}|u_{i}(t,x)-\phi_{i}(x+ct)|\leq Ce^{-\mu t},\ \ t\geq 0,\ \ i=1,2.$$
\end{theorem}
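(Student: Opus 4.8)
The plan is to follow the now-standard three-part strategy of the weighted energy method combined with the comparison principle, adapted to the nonlocal dispersal system \eqref{101}. First I would reduce the stability problem to the study of a perturbation in a moving coordinate frame: setting $\xi = x+ct$ and writing $v_i(t,\xi) = u_i(t,x) - \phi_i(x+ct)$, I would derive the perturbation system satisfied by $(v_1,v_2)$. The boundedness statement $u_{i-}\le u_i(t,x)\le u_{i+}$ follows from the comparison principle established earlier in the paper together with the hypotheses on the initial data; the monotonicity condition $\partial_2 f \ge 0$ (from (A1)) is exactly what makes the reaction term quasi-monotone, so a pair of suitable ordered super/sub-solutions can be squeezed around the true solution. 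This part I regard as routine given the comparison principle is already in hand.

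The core is the a priori weighted-energy estimate. Following Mei et al., I would first establish convergence between the solution and the wave for ordered initial data that lie between the wave and one of the constant states, then remove that restriction via the comparison principle and the triangle inequality. For the ordered case, introduce $\tilde v_i = u_i - \phi_i \ge 0$ (or $\le 0$), multiply the first perturbation equation by $e^{2\mu t} w(\xi)\tilde v_1$, integrate over $\mathbb{R}$, and handle the nonlocal term $D\int_{\mathbb{R}} J(y)[\tilde v_1(t,\xi-y)-\tilde v_1(t,\xi)]dy$ by the change of variables $\xi \mapsto \xi+y$ together with the symmetry $J(-y)=J(y)$, producing a term controlled by $D\int_{\mathbb{R}} J(y) e^{-\beta y}\,d y$ — precisely the quantity appearing in the threshold \eqref{104} and in the lower bound on $c$. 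The delayed term $f(\cdot,\phi_1(\xi-c\tau)+\tilde v_1(t-\tau,\xi-c\tau)) - f(\cdot,\phi_1(\xi-c\tau))$ is estimated using $\partial_2 f\ge 0$, $\partial_{ij}f\le 0$ and a Cauchy–Schwarz splitting, which contributes a term at time $t-\tau$ that one absorbs by the standard delayed-energy bookkeeping (adding $\int_{t-\tau}^t$ of the relevant integral to the Lyapunov functional). The $u_2$-equation is an ODE in $t$ for each $\xi$ and is handled directly: multiply by $e^{2\mu t} w(\xi)\tilde v_2$ and integrate, using that $\gamma_2 > 0$ gives a good negative definite term $-\gamma_2 \|\tilde v_2\|^2_{L^2_w}$ while the coupling $\gamma_1 \tilde v_1 \tilde v_2$ is split by Young's inequality. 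Summing the two estimates with appropriately chosen weights, the key structural inequality to verify is that the coefficient matrix of the resulting quadratic form in $(\|\tilde v_1\|_{L^2_w}, \|\tilde v_2\|_{L^2_w})$ — with entries built from $c\beta$, $D/2$, $\gamma_1$, $\gamma_2$, $\partial_i f(K,K)$, $\partial_i f(0,0)$ and $D\int_{\mathbb{R}}J(y)e^{-\beta y}dy$ — is negative definite for small $\mu$; this is exactly where the three lower bounds on $c$ in the theorem statement (and the condition \eqref{104}) enter, and where the region-splitting $\xi\le\xi_0$ versus $\xi>\xi_0$ using \eqref{103} is needed to control the $\xi$-dependent coefficients $\partial_i f(\phi_1(\xi),\phi_1(\xi-c\tau))$. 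I expect these algebraic inequalities to be the main obstacle, and they are evidently the content of the appendix (``the key inequalities used in Section 3''), so in the main proof I would simply invoke them.

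Having the weighted $L^2$ estimate, I would differentiate the perturbation system once in $\xi$ and repeat the argument to bootstrap to a weighted $H^1$ estimate, giving $\|(v_1,v_2)(t)\|_{H^1_w} \le C e^{-\mu t}$ for any $0<\mu<\min\{\mu_1,\mu_2\}$ as defined in Lemma \ref{lemma2}; the continuity-in-time statement $v_i\in C([0,\infty);H^1_w(\mathbb{R}))$ comes from the local existence theory together with the uniform a priori bound and a continuation argument. Finally, the $L^\infty$ decay $\sup_{x}|u_i(t,x)-\phi_i(x+ct)| \le C e^{-\mu t}$ follows from the weighted $H^1$ decay via the Sobolev embedding $H^1(\mathbb{R})\hookrightarrow C(\mathbb{R})$: on the half-line $\xi > \xi_0$ the weight is $1$ so $H^1_w = H^1$ directly controls the sup norm, and on $\xi\le\xi_0$ one has $w(\xi)\ge 1$, so $H^1_w$ again dominates $H^1$ there; combining gives the uniform bound. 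The only delicacy is that the decay rate $\mu$ must be taken strictly below $\min\{\mu_1,\mu_2\}$, reflecting the loss incurred in closing the delayed energy inequality, which is consistent with the statement.
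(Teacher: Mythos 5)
Your proposal follows essentially the same route as the paper: the max/min decomposition of the initial data around the wave, the comparison principle to squeeze $u$ between the ordered solutions $u^{\pm}$, the weighted $L^2_w$ energy estimate with the nonlocal term handled by the change of variables and the delay term by time-shifting, the key coefficient positivity deferred to the appendix lemmas, the $\xi$-differentiation bootstrap to $H^1_w$, and the Sobolev embedding (using $w\geq 1$) for the sup-norm decay. The argument is correct and matches the paper's proof in all essentials.
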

\section{Proof of stability}

\setcounter{equation}{0}
\label{sec:3}

First of all, before proving the stability, we need to establish the boundedness and comparison principle for the solutions of the Cauchy problem \eqref{101}-\eqref{102}.

\begin{lemma}[Boundedness]
\label{lemma3.1}  Assume (A1)-(A2) hold. Let the initial data satisfy
$$(u_{1-},u_{2-})\leq (u_{10}(s,x),u_{20}(x))\leq (u_{1+},u_{2+}),\ \ (t,x)\in[-\tau,0]\times\mathbb{R},$$
then the solution\ $(u_{1}(t,x),u_{2}(t,x))$\ of the Cauchy problem\ \eqref{101}-\eqref{102}\ satisfies
$$(u_{1-},u_{2-})\leq (u_{1}(t,x),u_{2}(t,x))\leq (u_{1+},u_{2+}),\ \ (t,x)\in\mathbb{R}_{+}\times\mathbb{R}.$$
\end{lemma}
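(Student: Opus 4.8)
The plan is to prove the invariance of the box $[u_{1-},u_{2-}]\times[u_{1+},u_{2+}]$ by a supersolution/subsolution comparison argument, exploiting the quasi-monotone structure guaranteed by (A1)-(A2). First I would identify the candidate lower and upper constant equilibria. Since $(0,0)$ and $\bigl(K,\tfrac{\gamma_1 K}{\gamma_2}\bigr)$ are the two homogeneous steady states of \eqref{101} (by $f(0,0)=f(K,K)=0$ and $\gamma_1 K=\gamma_2\cdot\tfrac{\gamma_1 K}{\gamma_2}$), the natural choice is $(u_{1-},u_{2-})=(0,0)$ and $(u_{1+},u_{2+})=\bigl(K,\tfrac{\gamma_1 K}{\gamma_2}\bigr)$; these are precisely the far-field limits of the traveling wave. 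I would then verify that the constant functions $\underline U\equiv(u_{1-},u_{2-})$ and $\overline U\equiv(u_{1+},u_{2+})$ are respectively a subsolution and a supersolution of the full system, including the nonlocal term $J\ast u_1-u_1$ (which vanishes on constants since $\int_{\mathbb R}J=1$) and the delayed reaction term $f(u_1(t,x),u_1(t-\tau,x))$.

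The key step is to set up the comparison principle in a form suitable for this cooperative-type delayed nonlocal system. The right-hand side of the $u_1$-equation is quasi-monotone: it is nondecreasing in the delayed argument $u_1(t-\tau,x)$ because $\partial_2 f\ge 0$, nondecreasing in $u_2$ because of the $+\gamma_2 u_2$ coupling, and, after adding a large linear damping $+M u_1$ to both sides (with $M>0$ chosen larger than $\gamma_1+\sup|\partial_1 f|$), the resulting modified nonlinearity becomes nondecreasing in the local value $u_1$ as well; similarly the $u_2$-equation is cooperative in $u_1$. I would transform the Cauchy problem into an equivalent integral (Duhamel) formulation — using the semigroup generated by $D(J\ast\cdot-\cdot)-Mu_1$ for the first component and the ODE integrating factor $e^{-(\gamma_2+M)t}$ style representation for the second — so that the full nonlinearity appears with a nonnegativity-preserving kernel acting on nonnegative data. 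Iterating this integral map (Picard-type iteration starting from $\underline U$ and $\overline U$) produces monotone sequences trapped between $\underline U$ and $\overline U$, and the limit is the unique solution, which therefore inherits the two-sided bound on all of $\mathbb R_+\times\mathbb R$.

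The main obstacle I anticipate is handling the delay together with the nonlocal dispersal term in a rigorous comparison statement: one cannot directly invoke a classical maximum principle because the operator $J\ast\cdot-\cdot$ is not a differential operator and the reaction term at time $t$ depends on the unknown at time $t-\tau$. The clean way around this is a step-by-step (method of steps) argument on the intervals $[0,\tau],[\tau,2\tau],\dots$: on $[n\tau,(n+1)\tau]$ the delayed term $u_1(t-\tau,x)$ is a known function lying in $[u_{1-},u_{1+}]$ by the inductive hypothesis, so the system becomes, for each slab, a non-delayed cooperative nonlocal reaction system for which a comparison principle follows from the Duhamel/positivity argument above; the two-sided bound then propagates from one slab to the next. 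I would also need to check the elementary algebraic inequalities that make the constants genuine sub/supersolutions, namely $f(u_{1+},u_{1+})-\gamma_1 u_{1+}+\gamma_2 u_{2+}\le 0$ and $\gamma_1 u_{1+}-\gamma_2 u_{2+}\le 0$ (both hold with equality for the chosen equilibrium) and the corresponding reversed inequalities at the lower state (trivial since everything is $0$), together with $f\ge 0$ on the diagonal part of the boundary, which is exactly (A1). Finally, local existence and uniqueness in $C([0,\infty);H^1_w)$ — needed to talk about "the solution" — follows by a standard contraction-mapping argument on the same integral equation, so the boundedness conclusion applies to that unique solution.
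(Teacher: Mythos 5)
Your argument is correct and coincides with the approach the paper itself relies on: the paper omits the proof of Lemma \ref{lemma3.1}, deferring to \cite{Guo2018}, where exactly this scheme is used --- the constant states $(0,0)$ and $\bigl(K,\tfrac{\gamma_1K}{\gamma_2}\bigr)$ as sub/supersolutions, quasi-monotonicity from $\partial_2 f\ge 0$ and the cooperative $\gamma_1,\gamma_2$ coupling, a positivity-preserving Duhamel formulation for the nonlocal operator after adding a large linear damping, and the method of steps for the delay. No substantive gaps; only cosmetic points remain (e.g.\ the damping constant $M$ must also absorb the $-Du_1$ term unless, as you indicate, that term is kept inside the semigroup generator).
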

Denote
\begin{equation}
\begin{cases}
u_{10}^{+}(s, x)=\max\{u_{10}(s,x),\phi_{1}(x+cs)\},\ \ (s,x)\in[-\tau,0]\times\mathbb{R}, \\
u_{10}^{-}(s, x)=\min\{u_{10}(s,x),\phi_{1}(x+cs)\},\ \ (s,x)\in[-\tau,0]\times\mathbb{R},\\
u_{20}^{+}(x)=\max\{u_{20}(x),\phi_{2}(x)\},\ \ t=0,\ x\in\mathbb{R},\\
u_{20}^{-}(x)=\min\{u_{20}(x),\phi_{2}(x)\},\ \ t=0,\ x\in\mathbb{R}.
\end{cases}
\label{301}
\end{equation}

Next, we will give the comparison principle for the solutions of the Cauchy problem \eqref{101}-\eqref{102}.
\begin{lemma}[Comparison principle]
\label{lemma3.2} Assume (A1)-(A2) hold. Let
$$(u_{1}^{+}(t,x),u_{2}^{+}(t,x))\ and\ (u_{1}^{-}(t,x),u_{2}^{-}(t,x))$$
be the solutions of the Cauchy problem \eqref{101}-\eqref{102} satisfies
$$(u_{10}^{+}(s,x),u_{20}^{+}(x))\ and\ (u_{10}^{-}(s,x),u_{20}^{-}(x))$$
respectively, if
$$(u_{1-},u_{2-})\leq (u_{10}^{-}(s,x),u_{20}^{-}(x))\leq (u_{10}^{+}(s,x),u_{20}^{+}(x))\leq (u_{1+},u_{2+})$$
holds for\ $(s,x)\in[-\tau,0]\times\mathbb{R}$, then we have
$$(u_{1-},u_{2-})\leq (u_{1}^{-}(t,x),u_{2}^{-}(t,x))\leq (u_{1}^{+}(t,x),u_{2}^{+}(t,x))\leq (u_{1+},u_{2+}),\ \ (t,x)\in\mathbb{R}_{+}\times\mathbb{R}.$$
\end{lemma}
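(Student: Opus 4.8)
The plan is to reduce the comparison principle to a statement about nonnegativity of the difference of two ordered solutions, and to prove that nonnegativity by a Gronwall-type argument after peeling off the linear degradation terms. Write $(U_1,U_2) = (u_1^+ - u_1^-, u_2^+ - u_2^-)$. Since both $(u_i^\pm)$ solve \eqref{101} with the respective ordered initial data, subtracting the two systems gives, using the mean value theorem on $f$,
\begin{equation*}
\begin{cases}
U_{1t} = D[J\ast U_1 - U_1] + a_1(t,x) U_1(t,x) + a_2(t,x) U_1(t-\tau,x) - \gamma_1 U_1 + \gamma_2 U_2,\\
U_{2t} = \gamma_1 U_1 - \gamma_2 U_2,
\end{cases}
\end{equation*}
where $a_1(t,x) = \partial_1 f(\xi_1,\xi_2)$ and $a_2(t,x) = \partial_2 f(\xi_1,\xi_2)$ for suitable intermediate values in $[0,K]^2$ (this uses the boundedness from Lemma \ref{lemma3.1}, which guarantees all arguments stay in $[0,K]$). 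By (A1), $a_2(t,x) \geq 0$; $a_1$ is merely bounded, say $|a_1| \leq M$. The initial data satisfy $U_1(s,x) \geq 0$ on $[-\tau,0]\times\mathbb{R}$ and $U_2(0,x) \geq 0$.

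Next I would remove the possibly-negative coefficient $a_1$ and the term $-\gamma_1 U_1 - \gamma_2 U_2$ by the substitution $V_i(t,x) = e^{-\lambda t} U_i(t,x)$ with $\lambda > M + \gamma_1 + D$ (any sufficiently large constant). Then $V_1, V_2$ solve the same type of system with the zeroth-order coefficient of $V_1$ in the first equation now strictly negative and the nonlocal operator still given by $D[J\ast V_1 - V_1]$, whose kernel integrates to one. The goal becomes: show $V_1, V_2 \geq 0$ for all $t \geq 0$. I would run a contradiction/continuity argument: suppose not, and let $t_* = \inf\{t : \inf_x V_i(t,x) < 0 \text{ for some } i\}$. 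Because $V_2$ is obtained by integrating the ODE $V_{2t} = \gamma_1 V_1 - (\gamma_2+\lambda) V_2$ along characteristics (here $x$ is a parameter), as long as $V_1 \geq 0$ on $[0,t]$ one gets $V_2(t,x) \geq e^{-(\gamma_2+\lambda)t} V_2(0,x) \geq 0$; so the first time positivity can fail is in the $V_1$ component. At such a first touching point $(t_*, x_*)$ (or along an infimizing sequence, using that $V_1 \in C([0,\infty);H^1_w)\hookrightarrow$ bounded continuous) one has $V_1(t_*,x_*)=0$, $V_{1t}(t_*,x_*)\leq 0$, while the right-hand side is $D J\ast V_1(t_*,x_*) - (D+\gamma_1+\lambda-a_1)\cdot 0 + a_2 V_1(t_*-\tau,x_*) + \gamma_2 V_2(t_*,x_*) \geq 0$, with strict sign unless $V_1 \equiv 0$ — the standard sliding/strong-maximum contradiction. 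Since the nonlocal term $J\ast V_1 \geq 0$ whenever $V_1 \geq 0$, this is exactly the mechanism; one converts it into a clean Gronwall estimate on $\inf_x V_1^-(t,x)$ (the negative part) by testing against a suitable weight, or by a direct $L^\infty$ iteration on $[0,\tau],[\tau,2\tau],\dots$ using Duhamel's formula for the semigroup generated by $D(J\ast\,\cdot\,-\,\cdot\,) - (\text{const})$, which preserves positivity because $e^{tD(J\ast\cdot\,-\cdot)}$ has nonnegative kernel. This yields $V_1,V_2\geq 0$, hence $u_i^- \leq u_i^+$.

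Finally, to obtain the two outer inequalities $u_{i-} \leq u_i^-$ and $u_i^+ \leq u_{i+}$, I would apply exactly the same comparison step twice more, comparing each of $(u_1^\pm,u_2^\pm)$ against the constant solutions $(u_{1\mp},u_{2\mp})$; these constants are super/sub-solutions of \eqref{101} by (A1)–(A2) (this is essentially the content of Lemma \ref{lemma3.1}, which I may invoke directly for the endpoints). Combining the three comparisons gives the chain
$$(u_{1-},u_{2-}) \leq (u_1^-,u_2^-) \leq (u_1^+,u_2^+) \leq (u_{1+},u_{2+}).$$
The main obstacle is making the strong-maximum-principle step fully rigorous in the nonlocal, delayed setting on the unbounded domain $\mathbb{R}$: the infimum over $x$ need not be attained, so the "touching point'' argument must be replaced by an $\varepsilon$-approximate version or, more cleanly, by the Duhamel/positivity-of-semigroup iteration over successive delay intervals $[k\tau,(k+1)\tau]$, where on each interval the delayed term $a_2 V_1(t-\tau,x)$ is a known nonnegative forcing. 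Handling the weighted space $H^1_w$ — i.e. checking that the solutions and differences indeed lie in $C([0,\infty);H^1_w(\mathbb{R}))$ so that the above manipulations are legitimate — is routine but needs to be stated.
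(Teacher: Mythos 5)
The paper gives no proof of this lemma---it is omitted with a pointer to \cite{Guo2018}---and your proposal reconstructs essentially the standard argument used there: subtract the two ordered solutions, linearize $f$ by the mean value theorem (with Lemma \ref{lemma3.1} keeping the intermediate points in $[0,K]^2$ so that the delayed coefficient is nonnegative by (A1)), and propagate nonnegativity of the difference via the positivity-preserving nonlocal operator together with a Duhamel/Gronwall iteration over the delay intervals $[k\tau,(k+1)\tau]$. This is correct; your own caveat is the right one---on all of $\mathbb{R}$ the touching-point version should be discarded in favor of the iteration---and the only slip is that the off-diagonal term $\gamma_{2}U_{2}$ should be kept rather than removed, since it is a nonnegative coupling essential to the quasi-monotone structure.
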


The proofs of Lemmas \ref{lemma3.1} and \ref{lemma3.2} are similar to \cite{Guo2018}, so they are omitted for simplicity.

Below we will prove the stability of traveling wave fronts of the system \eqref{101} in two steps.
First, we prove that $u_{i}^{+}(t,x)$ converges to $\phi_{i}(\xi)$.

\begin{proposition}
\label{proposition3.3} Assume (A1)-(A2) hold. Then for any constant $0<\mu<\min\{\mu_{1},\mu_{2}\}$, there is
\begin{eqnarray}
\sup_{x\in\mathbb{R}}|u_{i}^{+}(t,x)-\phi_{i}(x+ct)|\leq Ce^{-\mu t},\ \ i=1,2,
\label{302}
\end{eqnarray}
where $\mu_1$ and $\mu_2$ are defined in Lemma 4.2.
\end{proposition}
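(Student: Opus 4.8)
The plan is to prove Proposition~\ref{proposition3.3} via the standard weighted-energy-plus-comparison scheme of Mei et al., adapted to the nonlocal system \eqref{101}. Since by Lemma~\ref{lemma3.2} we have $\phi_i(x+ct)\leq u_i^+(t,x)\leq u_{i+}$, set $v_i(t,\xi):=u_i^+(t,\xi-ct)-\phi_i(\xi)\geq 0$ in the moving frame $\xi=x+ct$. Subtracting the traveling-wave equation \eqref{105} from \eqref{101}, the first component satisfies
\begin{align*}
v_{1t}+cv_{1\xi}&=D[J\ast v_1-v_1]+f(u_1^+,u_1^+(t-\tau))-f(\phi_1,\phi_1(\cdot-c\tau))-\gamma_1 v_1+\gamma_2 v_2,
\end{align*}
and the second is $v_{2t}+cv_{2\xi}=\gamma_1 v_1-\gamma_2 v_2$. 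Using (A1)--(A2), the mean-value theorem gives $f(u_1^+,u_1^+(t-\tau))-f(\phi_1,\phi_1(\cdot-c\tau))\leq \partial_1 f(\cdot)\,v_1+\partial_2 f(\cdot)\,v_1(t-\tau)$ with the derivatives evaluated at intermediate points and bounded above, near $\xi=+\infty$, by $\tfrac12(\partial_i f(K,K)+\gamma_2)$ thanks to \eqref{103}; the key point is that $\partial_2 f\geq 0$ so the delayed term has a favorable sign in the comparison/energy argument. This is exactly the quasi-monotone situation the method handles.

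The main step is the weighted $L^2$ energy estimate. Multiply the $v_1$-equation by $w(\xi)v_1$ and the $v_2$-equation by $w(\xi)v_2$, integrate over $\mathbb{R}$, and add. The convection term $c\int w v_i v_{i\xi}$ integrates by parts to $-\tfrac{c}{2}\int w' v_i^2$, which, since $w'=-\beta w$ on $\xi\le\xi_0$ and $w'=0$ on $\xi>\xi_0$, produces a good damping term $\tfrac{c\beta}{2}\int_{\xi\le\xi_0} w v_i^2$ provided $c>0$. The nonlocal term $D\int w v_1(J\ast v_1-v_1)$ is handled by Cauchy--Schwarz and the change-of-variables/Fubini trick: $\int w(\xi)v_1(\xi)(J\ast v_1)(\xi)\,d\xi\le \tfrac12\int\big(\int J(y)\tfrac{w(\xi)}{w(\xi-y)}dy\big)w(\xi)v_1^2(\xi)\,d\xi+\tfrac12\int w v_1^2$, and $\int J(y)\tfrac{w(\xi)}{w(\xi-y)}dy\le\int J(y)e^{-\beta y}dy$ for $\xi\le\xi_0$ (with the bound $1$ for $\xi>\xi_0$, splitting the integral as in the appendix). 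Collecting all terms, the delayed term $2\partial_2 f\,\int w v_1(t)v_1(t-\tau)$ is dominated using Young's inequality by $\int w v_1^2(t)+\int w v_1^2(t-\tau)$ contributions; one then chooses the weight parameter $\beta$ (via \eqref{104}) and the speed $c$ (via the three lower bounds in Theorem~\ref{thm201}) so that the resulting coefficient of $\int w(v_1^2+v_2^2)$ is strictly negative. This yields, after absorbing the delay history term, a differential inequality of the form
\begin{equation*}
\frac{d}{dt}\Big(\|v_1(t)\|_{L^2_w}^2+\|v_2(t)\|_{L^2_w}^2\Big)+C_1\Big(\|v_1(t)\|_{L^2_w}^2+\|v_2(t)\|_{L^2_w}^2\Big)\le C_2\|v_1(t-\tau)\|_{L^2_w}^2,
\end{equation*}
and a Gronwall-type argument (or the continuation/bootstrap argument on $[n\tau,(n+1)\tau]$, exactly as in \cite{Mei4,Mei5,Lv2015}) gives $\|v_i(t)\|_{L^2_w}^2\le Ce^{-2\mu_i t}$ for the rates $\mu_1,\mu_2$ coming from the characteristic/transcendental equation solved in Lemma~4.2.

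The final step upgrades the $L^2_w$ decay to $H^1_w$ and then to $L^\infty$. For $H^1_w$, differentiate the equations in $\xi$, repeat the energy estimate for $v_{i\xi}$ (the structure is identical; the nonlinear and nonlocal terms now also involve $v_i$ and $v_{i\xi}$, all already controlled), obtaining $\|v_i(t)\|_{H^1_w}^2\le Ce^{-2\mu t}$ for any $\mu<\min\{\mu_1,\mu_2\}$. Finally, the Sobolev embedding $H^1(\mathbb{R})\hookrightarrow C(\mathbb{R})$ together with $w(\xi)\ge c_0>0$ on any half-line $[\xi_0,\infty)$, and $w$ growing like $e^{\beta|\xi|}$ as $\xi\to-\infty$, gives $\sup_{x}|v_i(t,x)|=\sup_\xi|v_i(t,\xi)|\le C\|v_i(t)\|_{H^1_w}\le Ce^{-\mu t}$, which is \eqref{302}. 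I expect the main obstacle to be the bookkeeping in the energy estimate: verifying that the precise lower bounds imposed on $c$ and the choice of $\beta$ make every leftover coefficient negative simultaneously for both components (the $\gamma_1,\gamma_2$ cross-terms between $v_1$ and $v_2$ must be split so that $\gamma_1v_1v_2$ is absorbed into the $v_1^2$ and $v_2^2$ damping on both lines), and in tracking how the delay term interacts with the time-stepping so that the exponential rate does not degrade — this is precisely where the inequalities relegated to the appendix are needed.
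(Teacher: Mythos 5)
Your proposal follows essentially the same route as the paper: the same decomposition $v_i=u_i^+-\phi_i\geq 0$ via the comparison principle, the same weighted $L^2$ energy estimate with the nonlocal term handled by Cauchy--Schwarz/Fubini, the delay term by Young's inequality with a time shift, the cross terms $\gamma_1 v_1v_2$, $\gamma_2 v_1v_2$ split between the two component estimates, and the same $H^1_w$ upgrade plus Sobolev embedding. The only cosmetic difference is that you derive a differential inequality and invoke Gronwall, whereas the paper multiplies by $e^{2\mu t}$ at the outset and absorbs everything into the positivity of the coefficients $A_i(\mu,\xi)$ proved in the appendix; these are equivalent.
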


\begin{proof}
Let
\begin{equation*}
\begin{cases}
v_{i}(t,\xi)=u_{i}^{+}(t,x)-\phi_{i}(x+ct),\ \ i=1,2,\\
v_{10}(s,\xi)=u_{10}^{+}(s,x)-\phi_{1}(x+cs),\ \ (s,x)\in[-\tau,0]\times \mathbb{R},\\
v_{20}(\xi)=u_{20}^{+}(x)-\phi_{2}(x),\ \ t=0, x\in\mathbb{R}.
\end{cases}
\end{equation*}
Then, by \eqref{301} and Lemma \ref{lemma3.2}, we have
$$v_{i}(t,\xi)\geq 0,\ \ v_{10}(s,\xi)\geq 0,\ \ v_{20}(\xi)\geq 0,\ i=1,2.$$
Meanwhile,\ $v_{i}(t,\xi)$\ satisfies
\begin{equation}
v_{1t}(t,\xi)+cv_{1\xi}(t,\xi)-DJ\ast v_{1}(t,\xi)+(D+\gamma_{1})v_{1}(t,\xi)=P(t,\xi)+\gamma_{2}v_{2}(t,\xi),
\label{303}
\end{equation}
\begin{equation}
v_{2t}(t,\xi)+cv_{2\xi}(t,\xi)+\gamma_{2}v_{2}(t,\xi)=\gamma_{1}v_{1}(t,\xi)
\label{304}
\end{equation}
with the initial data
$$v_{1}(s,\xi)=v_{10}(s,\xi),\ \ (s,\xi)\in[-\tau,0]\times\mathbb{R} \ \ and\ \ v_{2}(\xi)=v_{20}(\xi),\ \xi\in\mathbb{R},$$
where
\begin{eqnarray*}
P(t,\xi)
=f\big(\phi_{1}(\xi)+v_{1}(t,\xi),
\phi_{1}(\xi-c\tau)+v_{1}(t-\tau,\xi-c\tau)\big)
-f\big(\phi_{1}(\xi),\phi_{1}(\xi-c\tau)\big).
\end{eqnarray*}
Furthermore, let us linearize the nonlinear term $P(t,\xi)$ of \eqref{304}, we equivalently obtain
\begin{eqnarray}
&&v_{1t}(t,\xi)+cv_{1\xi}(t,\xi)-DJ\ast v_{1}(t,\xi)+(D+\gamma_{1}-\partial_{1}f\big(\phi_{1}(\xi),\phi_{1}(\xi-c\tau))
v_{1}(t,\xi)\nonumber\\
&=&Q(t,\xi)+\gamma_{2}v_{2}(t,\xi)+\partial_{2}f\big(\phi_{1}(\xi), \phi_{1}(\xi-c\tau)\big)v_{1}(t-\tau,\xi-c\tau),
\label{305}
\end{eqnarray}
where
\begin{eqnarray*}
Q(t,\xi)&=&f\big(\phi_{1}(\xi)+v_{1}(t,\xi),
\phi_{1}(\xi-c\tau)+v_{1}(t-\tau,\xi-c\tau)\big)-f\big(\phi_{1}(\xi),\phi_{1}(\xi-c\tau)\big)\nonumber\\
&&
-\partial_{1}f\big(\phi_{1}(\xi),\phi_{1}(\xi-c\tau)\big)v_{1}(t,\xi)
-\partial_{2}f\big(\phi_{1}(\xi),\phi_{1}(\xi-c\tau)\big)v_{1}(t-\tau,\xi-c\tau)\nonumber\\
&\leq& 0.
\end{eqnarray*}

Multiplying \eqref{305} and \eqref{304} by\ $e^{2\mu t}w(\xi)v_{1}(t,\xi)$ and\ $e^{2\mu t}w(\xi)v_{2}(t,\xi)$,\ respectively, we get
\begin{eqnarray}
&&\Big\{\frac{1}{2}e^{2\mu t}wv_{1}^{2}\Big\}_{t}+\Big\{\frac{c}{2}e^{2\mu t}wv_{1}^{2}\Big\}_{\xi}
+\Big\{-\frac{c}{2}\frac{w'}{w}-\mu+D+\gamma_{1}-\partial_{1}f\big(\phi_{1}(\xi),\phi_{1}(\xi-c\tau))\Big\}
\nonumber\\
&&\cdot e^{2\mu t}wv_{1}^{2}-De^{2\mu t}wv_{1}\int_{\mathbb{R}}J(y)v_{1}(t,\xi-y)dy\nonumber\\
&=&\gamma_{2}e^{2\mu t}wv_{1}v_{2}+e^{2\mu t}wv_{1}Q(t,\xi)+e^{2\mu t}wv_{1}\partial_{2}f\big(\phi_{1}(\xi),\phi_{1}(\xi-c\tau)\big)v_{1}(t-\tau,\xi-c\tau)\nonumber\\
\label{306}
\end{eqnarray}
and
\begin{eqnarray}
\left\{\frac{1}{2}e^{2\mu
t}wv_{2}^{2}\right\}_{t}+\left\{\frac{1}{2}e^{2\mu t}cwv_{2}^{2}\right\}_{\xi}
+\left\{-\frac{c}{2}(\frac{w'}{w})+\gamma_{2}-\mu\right\}
e^{2\mu t}wv_{2}^{2}
=\gamma_{1} e^{2\mu t}wv_{1}v_{2}.\nonumber\\
\label{307}
\end{eqnarray}
Integrating \eqref{306}\ with respect to\ $t$\ and\ $\xi$\ over\ $[0,t]\times\mathbb{R}$, we get
\begin{eqnarray}
&&e^{2\mu t}||v_{1}(t)||_{L_{w}^{2}}^{2}+\int_{0}^{t}\int_{\mathbb{R}}
\Big\{-c\frac{w'}{w}-2\mu+2D+2\gamma_{1}-2\partial_{1}f\big(\phi_{1}(\xi),\phi_{1}(\xi-c\tau))\Big\}\nonumber\\
&&\cdot e^{2\mu s}wv_{1}^{2}d\xi ds-2D\int_{0}^{t}\int_{\mathbb{R}}e^{2\mu s}wv_{1}\int_{\mathbb{R}}J(y)v_{1}(s,\xi-y)dyd\xi ds\nonumber\\
&=&||v_{10}(0)||_{L_{w}^{2}}^{2}+2\int_{0}^{t}\int_{\mathbb{R}}\gamma_{2}e^{2\mu s}wv_{1}v_{2}d\xi ds+
2\int_{0}^{t}\int_{\mathbb{R}}e^{2\mu s}wv_{1}Q(t,\xi)d\xi ds\nonumber\\
&&+2\int_{0}^{t}\int_{\mathbb{R}}e^{2\mu s}wv_{1}\partial_{2}f\big(\phi_{1}(\xi),\phi_{1}(\xi-c\tau)\big)v_{1}(s-\tau,\xi-c\tau)d\xi ds.
\label{308}
\end{eqnarray}
By the Cauchy-Schwarz inequality, we obtain
\begin{eqnarray}
&&2\int_{0}^{t}\int_{\mathbb{R}}e^{2\mu s}wv_{1}\partial_{2}f\big(\phi_{1}(\xi),\phi_{1}(\xi-c\tau)\big)v_{1}(s-\tau,\xi-c\tau)d\xi ds\nonumber\\
&\leq&\int_{0}^{t}\int_{\mathbb{R}}e^{2\mu s}\partial_{2}f\big(\phi_{1}(\xi),\phi_{1}(\xi-c\tau)\big)wv_{1}^{2}d\xi ds\nonumber\\
&&+\int_{0}^{t}\int_{\mathbb{R}}e^{2\mu s}\partial_{2}f\big(\phi_{1}(\xi),\phi_{1}(\xi-c\tau)\big)wv_{1}^{2}(s-\tau,\xi-c\tau)d\xi ds\nonumber\\
&=&\int_{0}^{t}\int_{\mathbb{R}}e^{2\mu s}\partial_{2}f\big(\phi_{1}(\xi),\phi_{1}(\xi-c\tau)\big)wv_{1}^{2}d\xi ds
\nonumber\\
&&+\int_{-\tau}^{t-\tau}\int_{\mathbb{R}}e^{2\mu (s+\tau)}\partial_{2}f\big(\phi_{1}(\xi+c\tau),\phi_{1}(\xi)\big)w(\xi+c\tau)v_{1}^{2}d\xi ds\nonumber\\
&\leq&\int_{0}^{t}\int_{\mathbb{R}}e^{2\mu s}\partial_{2}f\big(\phi_{1}(\xi),\phi_{1}(\xi-c\tau)\big)wv_{1}^{2}d\xi ds
\nonumber\\
&&+e^{2\mu \tau}\int_{-\tau}^{0}\int_{\mathbb{R}}e^{2\mu s}\partial_{2}f\big(\phi_{1}(\xi+c\tau),\phi_{1}(\xi)\big)w(\xi+c\tau)v_{10}^{2}d\xi ds\nonumber\\
&&+e^{2\mu \tau}\int_{0}^{t}\int_{\mathbb{R}}e^{2\mu s}\partial_{2}f\big(\phi_{1}(\xi+c\tau),\phi_{1}(\xi)\big)w(\xi+c\tau)v_{1}^{2}d\xi ds,
\label{309}
\end{eqnarray}
\begin{eqnarray}
\Big|2\int_{0}^{t}\int_{\mathbb{R}}\gamma_{2}e^{2\mu s}wv_{1}v_{2}d\xi ds\Big|
\leq\int_{0}^{t}\int_{\mathbb{R}}\gamma_{2}e^{2\mu s}w[v_{1}^{2}+v_{2}^{2}]d\xi ds
\label{3010}
\end{eqnarray}
and
\begin{eqnarray}
&&2D\int_{0}^{t}\int_{\mathbb{R}}e^{2\mu s}wv_{1}\int_{\mathbb{R}}J(y)v_{1}(s,\xi-y)dyd\xi ds\nonumber\\
&\leq&D\int_{0}^{t}\int_{\mathbb{R}}e^{2\mu s}wv_{1}^{2}d\xi ds+D\int_{0}^{t}\int_{\mathbb{R}}e^{2\mu s}w\int_{\mathbb{R}}J(y)v_{1}^{2}(s,\xi-y)dyd\xi ds\nonumber\\
&=&D\int_{0}^{t}\int_{\mathbb{R}}e^{2\mu s}wv_{1}^{2}d\xi ds+D\int_{0}^{t}\int_{\mathbb{R}}e^{2\mu s}wv_{1}^{2}\int_{\mathbb{R}}J(y)\frac{w(\xi+y)}{w(\xi)}dyd\xi ds.\nonumber\\
\label{3011}
\end{eqnarray}
Thus, \eqref{308} reduces to
\begin{eqnarray}
&&e^{2\mu t}||v_{1}(t)||_{L_{w}^{2}}^{2}+\int_{0}^{t}\int_{\mathbb{R}}
\Big\{-c\frac{w'}{w}-2\mu+D+2\gamma_{1}-2\partial_{1}f\big(\phi_{1}(\xi),\phi_{1}(\xi-c\tau))
\nonumber\\
&&-\partial_{2}f\big(\phi_{1}(\xi),\phi_{1}(\xi-c\tau))-\gamma_{2}-e^{2\mu \tau}\frac{w(\xi+c\tau)}{w(\xi)}\partial_{2}f\big(\phi_{1}(\xi+c\tau),\phi_{1}(\xi)\big)
\nonumber\\
&&-D\int_{\mathbb{R}}J(y)\frac{w(\xi+y)}{w(\xi)}dy\Big\}
e^{2\mu s}wv_{1}^{2}d\xi ds\nonumber\\
&\leq&||v_{10}(0)||_{L_{w}^{2}}^{2}+\int_{0}^{t}\int_{\mathbb{R}}\gamma_{2}e^{2\mu s}wv_{2}^{2}d\xi ds+
2\int_{0}^{t}\int_{\mathbb{R}}e^{2\mu s}wv_{1}Q(t,\xi)d\xi ds\nonumber\\
&&+e^{2\mu \tau}\int_{-\tau}^{0}\int_{\mathbb{R}}e^{2\mu s}\partial_{2}f\big(\phi_{1}(\xi+c\tau),\phi_{1}(\xi)\big)w(\xi+c\tau)v_{10}^{2}d\xi ds.
\label{3012}
\end{eqnarray}
Note that $Q(t,\xi)\leq 0$. Therefore, \eqref{3012} reduces to
\begin{eqnarray}
&&e^{2\mu t}||v_{1}(t)||_{L_{w}^{2}}^{2}+\int_{0}^{t}\int_{\mathbb{R}}
\Big\{-c\frac{w'}{w}-2\mu+D+2\gamma_{1}-2\partial_{1}f\big(\phi_{1}(\xi),\phi_{1}(\xi-c\tau))
\nonumber\\
&&-\partial_{2}f\big(\phi_{1}(\xi),\phi_{1}(\xi-c\tau))-\gamma_{2}-e^{2\mu \tau}\frac{w(\xi+c\tau)}{w(\xi)}\partial_{2}f\big(\phi_{1}(\xi+c\tau),\phi_{1}(\xi)\big)
\nonumber\\
&&-D\int_{\mathbb{R}}J(y)\frac{w(\xi+y)}{w(\xi)}dy\Big\}
e^{2\mu s}wv_{1}^{2}d\xi ds\nonumber\\
&\leq&||v_{10}(0)||_{L_{w}^{2}}^{2}+\int_{0}^{t}\int_{\mathbb{R}}\gamma_{2}e^{2\mu s}wv_{2}^{2}d\xi ds+C\int_{-\tau}^{0}||v_{10}(s)||_{L_{w}^{2}}^{2}ds.
\label{3013}
\end{eqnarray}

Next, integrating\ \eqref{307}\ with respect to\ $t$\ and\ $\xi$\ over\ $[0,t]\times\mathbb{R}$, we have
\begin{eqnarray}
&&e^{2\mu t}\|v_{2}(t)\|_{L_{w}^{2}}^{2}+
\int_{0}^{t}\int_{\mathbb{R}}\left\{-c\frac{w'}{w}
+2\gamma_{2}-2\mu\right\}e^{2\mu
s}w(\xi)v_{2}^{2}(s,\xi)d\xi ds\nonumber\\
&=& ||v_{20}(0)\|_{L_{w}^{2}}^{2}+2\int_{0}^{t}\int_{\mathbb{R}}e^{2\mu
s}\gamma_{1} w(\xi)v_{1}(s,\xi)v_{2}(s,\xi)d\xi ds .
\label{3014}
\end{eqnarray}
Using the Cauchy-Schwarz inequality, we obtain
\begin{eqnarray*}
2\int_{0}^{t}\int_{\mathbb{R}}\gamma_{1} e^{2\mu
s}w(\xi)v_{1}(s,\xi)v_{2}(s,\xi) d\xi ds\leq \int_{0}^{t}\int_{\mathbb{R}}\gamma_{1} e^{2\mu
s}w(\xi)\big[v_{1}^{2}(s,\xi) +v_2^{2}(s,\xi)\big]d\xi ds.
\end{eqnarray*}
Thus, \eqref{3014} is reduced to
\begin{eqnarray}
&&e^{2\mu t}\|v_{2}(t)\|_{L_{w}^{2}}^{2}+
\int_{0}^{t}\int_{\mathbb{R}}\left\{-c\frac{w'}{w}
+2\gamma_{2}-\gamma_{1}-2\mu\right\}e^{2\mu s}w(\xi) v_{2}^{2}(s,\xi)d\xi ds\nonumber\\
&\leq&\|v_{20}(0)\|_{L_{w}^{2}}^{2}
+\int_{0}^{t}\int_{\mathbb{R}}e^{2\mu s}\gamma_{1} w(\xi)v_{1}^{2}(s,\xi)d\xi ds.
\label{3015}
\end{eqnarray}
Combining\ \eqref{3013}\ and\ \eqref{3015}, we obtain
\begin{eqnarray}
&&\sum_{i=1}^{2}e^{2\mu t}\|v_{i}(t)\|_{L_{w}^{2}}^{2}
+\int_{0}^{t}\int_{\mathbb{R}}A_{i}(\mu,\xi)e^{2\mu s}w(\xi)v_{i}^{2}(s,\xi)d\xi ds\nonumber\\
&\leq& \sum_{i=1}^{2}\|v_{i0}(0)\|_{L_{w}^{2}}^{2}
+C\int_{-\tau}^{0}||v_{10}(s)||_{L_{w}^{2}}^{2} ds,\ \ i=1,2,
\label{3016}
\end{eqnarray}
where
\begin{eqnarray*}
A_{1}(\mu,\xi)=B_{1}(\xi)-2\mu
-\frac{w(\xi+c\tau)}{w(\xi)}\partial_{2}f\big(\phi_{1}(\xi+c\tau),\phi_{1}(\xi)\big)(e^{2\mu \tau}-1),
\end{eqnarray*}
$$A_{2}(\mu,\xi)=B_{2}(\xi)-2\mu,$$
and
\begin{eqnarray*}
B_{1}(\xi)&=&-c\frac{w'}{w}+D+\gamma_{1}-2\partial_{1}f\big(\phi_{1}(\xi),\phi_{1}(\xi-c\tau))
-\partial_{2}f\big(\phi_{1}(\xi),\phi_{1}(\xi-c\tau))-\gamma_{2}\nonumber\\
&&-\frac{w(\xi+c\tau)}{w(\xi)}\partial_{2}f\big(\phi_{1}(\xi+c\tau),\phi_{1}(\xi)\big)
-D\int_{\mathbb{R}}J(y)\frac{w(\xi+y)}{w(\xi)}dy,
\end{eqnarray*}
$$B_{2}(\xi)=-c\frac{w'}{w}+\gamma_{2}-\gamma_{1}.$$
According to the inequalities $A_{i}(\mu,\xi)>0 (i=1,2)$ in the appendix, we can get
\begin{eqnarray}
\sum_{i=1}^{2}e^{2\mu t}\|v_{i}(t)\|_{L_{w}^{2}}^{2}\leq C\Big(\sum_{i=1}^{2}\|v_{i0}(0)\|_{L_{w}^{2}}^{2}
+\int_{-\tau}^{0}||v_{10}(s)||_{L_{w}^{2}}^{2} ds\Big),\ i=1,2,
\label{3017}
\end{eqnarray}
which implies
\begin{eqnarray}
e^{2\mu t}\|v_{i}(t)\|_{L_{w}^{2}}^{2}\leq C\Big(\sum_{i=1}^{2}\|v_{i0}(0)\|_{L_{w}^{2}}^{2}
+\int_{-\tau}^{0}||v_{10}(s)||_{L_{w}^{2}}^{2} ds\Big),\ \ i=1,2.
\label{3018}
\end{eqnarray}

Similarly, differentiating\ \eqref{305}\ and \eqref{304}\ with respect to\ $\xi$, and multiplying them by\ $e^{2\mu t}wv_{1\xi}$\ and\ $e^{2\mu t}wv_{2\xi}$,\ respectively. And integrating the resultant equations with respect to\ $t$\ and\ $\xi$\ over\ $[0,t]\times\mathbb{R}$, then by the same method of \eqref{3017}\ and\ \eqref{3018}, we have
\begin{eqnarray}
e^{2\mu t}\|v_{i\xi}(t)\|_{L_{w}^{2}}^{2}\leq C\Big(\sum_{i=1}^{2}\|v_{i0}(0)\|_{H_{w}^{1}}^{2}
+\int_{-\tau}^{0}||v_{10}(s)||_{H_{w}^{1}}^{2} ds\Big),\ i=1,2.
\label{3019}
\end{eqnarray}
Combining\ \eqref{3018} and \eqref{3019}, for any constant $0<\mu<\min\{\mu_{1},\mu_{2}\}$, we get
\begin{eqnarray}
\|v_{i}(t)\|_{H_{w}^{1}}^{2}\leq Ce^{-2\mu t}\Big(\sum_{i=1}^{2}\|v_{i0}(0)\|_{H_{w}^{1}}^{2}
+\int_{-\tau}^{0}||v_{10}(s)||_{H_{w}^{1}}^{2} ds\Big),\ i=1,2.
\label{3020}
\end{eqnarray}
By the Sobolev embedding inequality\ $H^{1}_{w}(\mathbb{R})\hookrightarrow H^{1}(\mathbb{R})\hookrightarrow C(\mathbb{R})$, we obtain
\begin{eqnarray*}
\sup_{\xi\in\mathbb{R}}|v_{i}(t,\xi)|\leq Ce^{-\mu t}\Big(\sum_{i=1}^{2}\|v_{i0}(0)\|_{H_{w}^{1}}^{2}
+\int_{-\tau}^{0}||v_{10}(s)||_{H_{w}^{1}}^{2} ds\Big)^{\frac{1}{2}},\ i=1,2,
\end{eqnarray*}
namely,
\begin{eqnarray*}
\sup_{x\in\mathbb{R}}|u_{i}^{+}(t,x)-\phi_{i}(x+ct)|\leq Ce^{-\mu t},\ i=1,2.
\end{eqnarray*}
The proof is complete.
\end{proof}

Second, we prove that $u_{i}^{-}(t,x)$ converges to $\phi(\xi)$.
Denote
\begin{equation*}
\begin{cases}
v_{i}(t,\xi)=\phi_{i}(x+ct)-u_{i}^{-}(t,x),\ \ i=1,2,\\
v_{10}(s,\xi)=\phi_{1}(x+cs)-u_{10}^{-}(s,x),\\
v_{20}(\xi)=\phi_{2}(x)-u_{20}^{-}(x).
\end{cases}
\end{equation*}
Similar to the Proposition \ref{proposition3.3}, we can give the second proposition.
\begin{proposition}
\label{proposition3.4}  Assume (A1)-(A2) hold. Then for any constant $0<\mu<\min\{\mu_{1},\mu_{2}\}$, there is
\begin{eqnarray*}
\sup_{x\in\mathbb{R}}|u_{i}^{-}(t,x)-\phi_{i}(x+ct)|\leq Ce^{-\mu t},\ \ i=1,2,
\end{eqnarray*}
where $\mu_1$ and $\mu_2$ are defined in Lemma 4.2.
\end{proposition}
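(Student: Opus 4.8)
The plan is to run the argument of Proposition \ref{proposition3.3} with the pair $(\Phi,u^{-})$ in the role played there by $(u^{+},\Phi)$. Set $v_{i}(t,\xi)=\phi_{i}(x+ct)-u_{i}^{-}(t,x)$, $v_{10}(s,\xi)=\phi_{1}(x+cs)-u_{10}^{-}(s,x)$ and $v_{20}(\xi)=\phi_{2}(x)-u_{20}^{-}(x)$. Since $u_{i0}^{-}\le\phi_{i}$ by \eqref{301} and $\Phi(x+ct)$ is itself a solution of \eqref{101}, the comparison principle (Lemma \ref{lemma3.2}, applied to $u^{-}$ and $\Phi$) gives $v_{i}(t,\xi)\ge0$, $v_{10}(s,\xi)\ge0$, $v_{20}(\xi)\ge0$, together with the squeeze $u_{i-}\le u_{i}^{-}(t,x)\le\phi_{i}(x+ct)$; in particular $0\le u_{1}^{-}\le\phi_{1}\le K$, so every argument of $f$ that appears below stays in $[0,K]^{2}$.

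Subtracting \eqref{101} for $u^{-}$ from \eqref{101} for $\Phi$ and passing to the coordinate $\xi=x+ct$, one checks that $(v_{1},v_{2})$ solves the mirror image of \eqref{303}--\eqref{304},
\begin{equation*}
v_{1t}+cv_{1\xi}-DJ\ast v_{1}+(D+\gamma_{1})v_{1}=\widetilde{P}(t,\xi)+\gamma_{2}v_{2},\qquad v_{2t}+cv_{2\xi}+\gamma_{2}v_{2}=\gamma_{1}v_{1},
\end{equation*}
with $v_{10},v_{20}\ge0$ and
\begin{equation*}
\widetilde{P}(t,\xi)=f\big(\phi_{1}(\xi),\phi_{1}(\xi-c\tau)\big)-f\big(\phi_{1}(\xi)-v_{1}(t,\xi),\,\phi_{1}(\xi-c\tau)-v_{1}(t-\tau,\xi-c\tau)\big).
\end{equation*}
From here I would repeat the weighted energy computation \eqref{306}--\eqref{3020} essentially verbatim: multiply the (linearized) $v_{1}$- and $v_{2}$-equations by $e^{2\mu t}w(\xi)v_{1}$ and $e^{2\mu t}w(\xi)v_{2}$, integrate over $[0,t]\times\mathbb{R}$, shift the delay term by $(s,\xi)\mapsto(s+\tau,\xi+c\tau)$ as in \eqref{309}, estimate the couplings $\gamma_{2}v_{1}v_{2}$, $\gamma_{1}v_{1}v_{2}$ and the nonlocal term by Cauchy--Schwarz as in \eqref{3010}--\eqref{3011}, and add the two inequalities. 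This gives an inequality of the form \eqref{3016} with the same $A_{i}(\mu,\xi)$, $B_{i}(\xi)$; the appendix bounds $A_{i}(\mu,\xi)>0$ for $0<\mu<\min\{\mu_{1},\mu_{2}\}$ then yield \eqref{3017}--\eqref{3018}, differentiating the equations in $\xi$ and repeating the estimate gives \eqref{3019}--\eqref{3020}, and $H_{w}^{1}(\mathbb{R})\hookrightarrow C(\mathbb{R})$ produces the stated exponential decay of $\sup_{x}|u_{i}^{-}(t,x)-\phi_{i}(x+ct)|$.

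The one step that is not a literal sign-flip of Proposition \ref{proposition3.3}, and the place where I expect the real work, is the second-order remainder. In Proposition \ref{proposition3.3} one has $Q(t,\xi)\le0$ by the $\partial_{ij}f\le0$ part of (A1): since $u_{1}^{+}=\phi_{1}+v_{1}$ lies \emph{above} $\phi_{1}$, concavity of $f$ forces the Taylor remainder about $\phi_{1}$ to be nonpositive, so it is discarded in \eqref{3013}. For $u_{1}^{-}=\phi_{1}-v_{1}$ the remainder about $\phi_{1}$ carries the \emph{opposite} sign and cannot be dropped. I would fix this by linearizing $\widetilde{P}$ about $u_{1}^{-}$ instead,
\begin{equation*}
\widetilde{P}(t,\xi)=\partial_{1}f\big(u_{1}^{-},u_{1}^{-}(t-\tau,\cdot)\big)v_{1}(t,\xi)+\partial_{2}f\big(u_{1}^{-},u_{1}^{-}(t-\tau,\cdot)\big)v_{1}(t-\tau,\xi-c\tau)+\widetilde{Q}(t,\xi),
\end{equation*}
for which the same concavity argument (now expanding \emph{up} from $u_{1}^{-}$ to $\phi_{1}$) gives $\widetilde{Q}\le0$, so it is again dropped. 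The price is that the coefficients in the energy identity now involve $\partial_{i}f\big(u_{1}^{-}(t,\xi),u_{1}^{-}(t-\tau,\xi-c\tau)\big)$ rather than $\partial_{i}f\big(\phi_{1}(\xi),\phi_{1}(\xi-c\tau)\big)$, so the appendix inequalities have to be revisited with this substitution. The main obstacle is therefore to check that $A_{i}(\mu,\xi)>0$ still holds; I would derive this from the squeeze $0\le u_{1}^{-}\le\phi_{1}\le K$ together with the monotonicity built into (A1) ($\partial_{1}f$ and $\partial_{2}f$ nonincreasing in each variable, $\partial_{2}f\ge0$), so that every $\partial_{i}f$-value occurring is dominated by $\partial_{i}f(0,0)$ and bounded below by $\partial_{i}f(K,K)$ --- precisely the range against which the weight exponent \eqref{104}, the threshold $\xi_{0}$ in \eqref{103}, and the lower bound on $c$ in Theorem \ref{thm201} are calibrated. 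Verifying that the appendix estimates indeed go through when $\partial_{i}f$ is allowed to range over $[0,K]^{2}$ consistently with the squeeze (rather than only along the profile $\Phi$) is the genuinely new point; everything else is a transcription of Proposition \ref{proposition3.3}.
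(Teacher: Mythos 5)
Your setup is correct, and you have put your finger on exactly the point that the paper's one-line ``similar to Proposition \ref{proposition3.3}'' conceals: with $v_{1}=\phi_{1}-u_{1}^{-}\geq 0$, the Taylor remainder of $f$ about $(\phi_{1}(\xi),\phi_{1}(\xi-c\tau))$ is \emph{nonnegative} (concavity, i.e., $\partial_{ij}f\leq 0$, now works against you because you are expanding downward), so the term $2\int e^{2\mu s}wv_{1}Q\,d\xi ds$ can no longer be discarded as in \eqref{3013}. The paper offers no argument here, so identifying this asymmetry is genuine added value on your part.

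However, your proposed repair does not close. Linearizing about $\big(u_{1}^{-},u_{1}^{-}(t-\tau,\cdot)\big)$ does restore $\widetilde{Q}\leq 0$, but since $\partial_{i}f$ is nonincreasing in each argument and $u_{1}^{-}\leq\phi_{1}$, the new coefficients satisfy $\partial_{i}f(u_{1}^{-},\cdot)\geq\partial_{i}f(\phi_{1},\cdot)$; they are \emph{larger} than the ones Lemma \ref{lemma1} controls. Your bound $\partial_{i}f(u_{1}^{-},\cdot)\leq\partial_{i}f(0,0)$ is harmless for $\xi\leq\xi_{0}$ (Case 1.1 already uses $\partial_{i}f(0,0)$ and is rescued by the $c\beta$ term), but it breaks Case 1.2: for $\xi>\xi_{0}$ the weight is constant, $-cw'/w=0$, and the positivity of $B_{1}$ there rests entirely on the profile-based estimate \eqref{103}, which bounds $\partial_{i}f$ along $(\phi_{1}(\xi),\phi_{1}(\xi-c\tau))$ by $\tfrac{1}{2}(\partial_{i}f(K,K)+\gamma_{2})$, with $\beta$ in \eqref{104} calibrated against $\partial_{i}f(K,K)$, not $\partial_{i}f(0,0)$. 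Since the initial perturbation is only in $H^{1}_{w}$, $u_{1}^{-}$ can sit near $0$ on a large set of $\xi>\xi_{0}$ where $\phi_{1}$ is near $K$, so $\partial_{i}f(u_{1}^{-},\cdot)$ can be as large as $\partial_{i}f(0,0)$ precisely where no exponential weight is available, and the quantity $D+\gamma_{1}-2\partial_{1}f(0,0)-2\partial_{2}f(0,0)-\gamma_{2}-D\int_{\mathbb{R}}J(y)e^{-\beta y}dy$ is not positive under (A1)--(A2) in the monostable setting (where $\partial_{1}f(0,0)+\partial_{2}f(0,0)\geq 0$). So the step you yourself flag as ``the genuinely new point'' is where the argument fails as written; completing it requires either an additional hypothesis of the type $\gamma_{1}-\gamma_{2}>2\partial_{1}f(0,0)+2\partial_{2}f(0,0)$ (so that the $\partial_{i}f(0,0)$ bound suffices for all $\xi$), or a genuinely different treatment of the $u^{-}$ branch, neither of which appears in the paper.
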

\textbf{Proof of Theorem \ref{thm201}}
When $(s,x)\in[-\tau,0]\times\mathbb{R}$, the initial data satisfies
$$u_{10}^{-}(s,x)\leq u_{10}(s,x)\leq u_{10}^{+}(s,x)\ \ and\ \ u_{20}^{-}(x)\leq u_{20}(x)\leq u_{20}^{+}(x).$$
By Lemma \ref{lemma3.1} and Lemma\ \ref{lemma3.2}, we can prove the solutions of the Cauchy problem \eqref{101}-\eqref{102} satisfies
$$u_{i}^{-}(t,x)\leq u_{i}(t,x)\leq u_{i}^{+}(t,x),\ \ (t,x)\in\mathbb{R}_{+}\times\mathbb{R},\ i=1,2.$$
According to Proposition \ref{proposition3.3} and Proposition\ \ref{proposition3.4}, we obtain
\begin{eqnarray*}
\sup_{x\in\mathbb{R}}|u_{i}(t,x)-\phi_{i}(x+ct)|\leq Ce^{-\mu t},\ \ i=1,2
\end{eqnarray*}
for any constant $0<\mu<\min\{\mu_{1},\mu_{2}\}$.

This completes the proof of Theorem \ref{thm201}.
$\hfill\blacksquare$

\section{Appendix}

\setcounter{equation}{0}
\label{sec:4}

 The proof of the key inequalities used in Proposition \ref{proposition3.3} is given below.
\begin{lemma}
\label{lemma1} Let $w(\xi)$ be the weight function defined in \eqref{201}, then
$$B_{i}(\xi)\geq C_{i}> 0,\ \ i=1,2.$$
\end{lemma}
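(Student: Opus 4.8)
The plan is to exploit the explicit form of the weight: $w'/w\equiv -\beta$ on $(-\infty,\xi_0)$ and $w'/w\equiv 0$ on $(\xi_0,\infty)$, so that $-c\,w'/w$ equals the large constant $c\beta$ on the left half-line and vanishes on the right. I would therefore estimate $B_1$ and $B_2$ separately on $\{\xi\le\xi_0\}$ and $\{\xi>\xi_0\}$ and take $C_i$ to be the smaller of the two constants obtained. Two uniform bounds carry the argument. First, by (A1) the mixed second derivatives $\partial_{ij}f$ are nonpositive, so $\partial_1 f(\cdot,\cdot)$ and $\partial_2 f(\cdot,\cdot)$ are nonincreasing in each variable; since $0\le\phi_1\le K$, this yields $\partial_i f(\phi_1(\xi),\phi_1(\xi-c\tau))\le\partial_i f(0,0)$ for every $\xi$, whereas on $\{\xi\ge\xi_0\}$ the finer estimate \eqref{103} gives $\partial_i f(\phi_1(\xi),\phi_1(\xi-c\tau))<\frac12\big(\partial_i f(K,K)+\gamma_2\big)$, which applies equally to $\partial_2 f(\phi_1(\xi+c\tau),\phi_1(\xi))$ after the shift $\xi\mapsto\xi+c\tau$. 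Second, since $\log w$ is convex with slopes in $\{-\beta,0\}$, we have $w(\xi+y)/w(\xi)\le 1$ for $y\ge 0$ and $w(\xi+y)/w(\xi)\le e^{-\beta y}$ for $y\le 0$, uniformly in $\xi$ (in particular $w(\xi+c\tau)/w(\xi)\le 1$), whence
\begin{equation*}
\int_{\mathbb{R}}J(y)\frac{w(\xi+y)}{w(\xi)}\,dy\ \le\ \frac12+\int_{-\infty}^{0}J(y)e^{-\beta y}\,dy\qquad\text{for all }\xi\in\mathbb{R},
\end{equation*}
a finite quantity because $\int_{\mathbb{R}}J(y)e^{-\lambda y}\,dy<\infty$.

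For $B_2$ this is immediate: on $\{\xi\le\xi_0\}$, $B_2(\xi)=c\beta+\gamma_2-\gamma_1>0$ by the speed bound $c>(\gamma_1-\gamma_2)/\beta$, and on $\{\xi>\xi_0\}$, $B_2(\xi)=\gamma_2-\gamma_1$. For $B_1$ on $\{\xi\le\xi_0\}$ I would substitute $-c\,w'/w=c\beta$, replace each $\partial_i f(\phi_1(\cdot))$ by $\partial_i f(0,0)$, bound $w(\xi+c\tau)/w(\xi)\le 1$ (using $\partial_2 f\ge 0$), and use the convolution bound above together with $\int_{-\infty}^{0}J e^{-\beta y}\le\int_{\mathbb{R}}J e^{-\beta y}$; this gives
\begin{equation*}
B_1(\xi)\ \ge\ c\beta+\frac D2+\gamma_1-2\partial_1 f(0,0)-2\partial_2 f(0,0)-\gamma_2-D\!\int_{\mathbb{R}}\!J(y)e^{-\beta y}\,dy\ =:\ C_1',
\end{equation*}
and $C_1'>0$ is exactly the content of the third term in the lower bound imposed on $c$ in Theorem \ref{thm201}. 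For $B_1$ on $\{\xi>\xi_0\}$ I would instead use $-c\,w'/w=0$, $w(\xi+c\tau)/w(\xi)=1$, the sharper estimate \eqref{103} on the three $f$-derivative terms, and the convolution bound, obtaining
\begin{equation*}
B_1(\xi)\ >\ D+\gamma_1-\partial_1 f(K,K)-\partial_2 f(K,K)-3\gamma_2-D\Big(\frac12+\int_{-\infty}^{0}J(y)e^{-\beta y}\,dy\Big)\ =:\ C_1'',
\end{equation*}
and $C_1''>0$ is precisely inequality \eqref{104} rearranged. One then takes $C_1=\min\{C_1',C_1''\}$ and $C_2=\min\{c\beta+\gamma_2-\gamma_1,\ \gamma_2-\gamma_1\}$.

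The only genuinely delicate point is the convolution term near the corner $\xi=\xi_0$: for $\xi$ just left of $\xi_0$ the ratio $w(\xi+y)/w(\xi)$ with $y>0$ need not be $\le e^{-\beta y}$, and one must keep it under the crude bound $\le 1$ on the $y>0$ range — this is the source of the term $\frac12$ above — after which the scheme closes only because the third speed bound and \eqref{104} carry exactly the slack ($+\frac D2$) needed to absorb that $\frac12$. The remaining work is bookkeeping: identifying $C_1'$ with the third term of the speed restriction, identifying $C_1''$ with \eqref{104}, and checking that \eqref{103} may be applied with the argument shift; all of this is routine and is what the appendix carries out.
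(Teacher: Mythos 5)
Your proposal is correct and follows essentially the same route as the paper's appendix: the same case split at $\xi_{0}$, the same use of \eqref{103} for $\xi>\xi_{0}$ and of monotonicity of $\partial_{i}f$ (from $\partial_{ij}f\leq 0$) for $\xi\leq\xi_{0}$, and the same identification of $C_{1}'$ with the third speed restriction in Theorem \ref{thm201} and of $C_{1}''$ with \eqref{104}; your only (harmless) deviation is packaging the kernel estimate as a single uniform bound $\int_{\mathbb{R}}J(y)w(\xi+y)/w(\xi)\,dy\leq\tfrac12+\int_{-\infty}^{0}J(y)e^{-\beta y}\,dy$ rather than splitting the convolution at $y=\xi_{0}-\xi$ in each case. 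Note that both your argument and the paper's tacitly require $\gamma_{2}>\gamma_{1}$ for the positivity of $C_{2}$ on $\{\xi>\xi_{0}\}$, so this is a shared implicit hypothesis rather than a gap in your proposal.
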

\begin{proof}
First, we will discuss the following two cases to prove $B_{1}(\xi)\geq C_{1}> 0$.

\textbf{Case 1.1} $\xi\leq\xi_{0}$.  From \eqref{201}, we have\ $w(\xi)=e^{-\beta(\xi-\xi_{0})}$. Therefore
\begin{eqnarray*}
B_{1}(\xi)&=&-c\frac{w'}{w}+D+\gamma_{1}-2\partial_{1}f\big(\phi_{1}(\xi),\phi_{1}(\xi-c\tau))
-\partial_{2}f\big(\phi_{1}(\xi),\phi_{1}(\xi-c\tau))-\gamma_{2}\nonumber\\
&&-\frac{w(\xi+c\tau)}{w(\xi)}\partial_{2}f\big(\phi_{1}(\xi+c\tau),\phi_{1}(\xi)\big)
-D\int_{\mathbb{R}}J(y)\frac{w(\xi+y)}{w(\xi)}dy\nonumber\\
&\geq& c\beta+D+\gamma_{1}-2\partial_{1}f(0,0)-2\partial_{2}f(0,0)-\gamma_{2}-D\int_{-\infty}^{\xi_{0}-\xi}J(y)e^{-\beta y}dy\nonumber\\
&&-D\int_{\xi_{0}-\xi}^{\infty}J(y)e^{\beta (\xi-\xi_{0})}dy\nonumber\\
&\geq& c\beta+\frac{D}{2}+\gamma_{1}-2\partial_{1}f(0,0)-2\partial_{2}f(0,0)-\gamma_{2}-D\int_{\mathbb{R}}J(y)e^{-\beta y}dy\nonumber\\
&:=&C_{11}>0.
\end{eqnarray*}

\textbf{Case 1.2} $\xi>\xi_{0}$.\  From \eqref{201}, we have\ $w(\xi)=1$,\ thus
\begin{eqnarray*}
B_{1}(\xi)&=&-c\frac{w'}{w}+D+\gamma_{1}-2\partial_{1}f\big(\phi_{1}(\xi),\phi_{1}(\xi-c\tau))
-\partial_{2}f\big(\phi_{1}(\xi),\phi_{1}(\xi-c\tau))-\gamma_{2}\nonumber\\
&&-\frac{w(\xi+c\tau)}{w(\xi)}\partial_{2}f\big(\phi_{1}(\xi+c\tau),\phi_{1}(\xi)\big)-D\int_{\mathbb{R}}J(y)\frac{w(\xi+y)}{w(\xi)}dy\nonumber\\
&\geq& D+\gamma_{1}-\partial_{1}f(K,K)-\partial_{2}f(K,K)-3\gamma_{2}-D\int_{-\infty}^{\xi_{0}-\xi}J(y)e^{-\beta (\xi+y-\xi_{0})}dy\nonumber\\
&&-D\int_{\xi_{0}-\xi}^{\infty}J(y)dy\nonumber\\
&\geq& \frac{D}{2}+\gamma_{1}-\partial_{1}f(K,K)-\partial_{2}f(K,K)-3\gamma_{2}-D\int_{-\infty}^{0}J(y)e^{-\beta y}dy\nonumber\\
&:=&C_{12}>0.
\end{eqnarray*}
Let\ $C_{1}=\min\{C_{11},C_{12}\},$ then we have\ $B_{1}(\xi)\geq C_{1}> 0$.

Second, we will prove that\ $B_{2}(\xi)\geq C_{2}> 0$.

\textbf{Case 2.1} $\xi\leq\xi_{0}$.  From \eqref{201}, we have\ $w(\xi)=e^{-\beta(\xi-\xi_{0})}$, then
\begin{eqnarray*}
B_{2}(\xi)=-c\frac{w'}{w}+\gamma_{2}-\gamma_{1}=c\beta+\gamma_{2}-\gamma_{1}:=C_{21}>0.
\end{eqnarray*}

\textbf{Case 2.2} $\xi>\xi_{0}$.  From \eqref{201}, we have $w(\xi)=1$, thus
\begin{eqnarray*}
B_{2}(\xi)=-c\frac{w'}{w}+\gamma_{2}-\gamma_{1}=\gamma_{2}-\gamma_{1}:=C_{22}>0.
\end{eqnarray*}
Let\ $C_{2}=\min\{C_{21},C_{22}\}$, then we obtain\ $B_{2}(\xi)\geq C_{2}>0$.

This completes the proof of Lemma \ref{lemma1}.
\end{proof}

\begin{lemma}
\label{lemma2} Assume (A1)-(A2) hold. Then for any constant $0<\mu<\min\{\mu_{1},\mu_{2}\}$, there is
$$A_{1}(\mu,\xi)\geq C_{3}> 0,\ \ A_{2}(\mu,\xi)\geq C_{4}> 0,$$
where\ $\mu_{1}>0$\ is the unique solution of the following equation
$$C_{1}-2\mu-(e^{2\mu \tau}-1)\partial_{2}f(0,0)=0,$$
and\ $\mu_{2}=\frac{C_{2}}{2}>0$ is the solution of equation $C_{2}-2\mu=0$.
\end{lemma}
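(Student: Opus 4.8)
The plan is to reduce both inequalities to the scalar lower bounds $B_i(\xi)\ge C_i>0$ already provided by Lemma \ref{lemma1}, after controlling the extra $\mu$-dependent contributions appearing in $A_1$ and $A_2$. The estimate for $A_2$ is immediate: since $A_2(\mu,\xi)=B_2(\xi)-2\mu$, Lemma \ref{lemma1} gives $A_2(\mu,\xi)\ge C_2-2\mu$, and for $0<\mu<\mu_2=C_2/2$ the right-hand side is a positive constant, which we take as $C_4$.

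For $A_1$ the only additional term compared with $B_1$ is $-\frac{w(\xi+c\tau)}{w(\xi)}\partial_2 f(\phi_1(\xi+c\tau),\phi_1(\xi))(e^{2\mu\tau}-1)$, and the first task is to bound it below by $-\partial_2 f(0,0)(e^{2\mu\tau}-1)$. Since $\partial_2 f\ge 0$ on $[0,K]^2$ by (A1) and $e^{2\mu\tau}-1\ge 0$, this term is nonpositive, so it suffices to bound the three factors from above. By (A1), $\partial_{12}f\le 0$ and $\partial_{22}f\le 0$, so $\partial_2 f$ is nonincreasing in each of its arguments on $[0,K]^2$; together with $0\le\phi_1\le K$ this yields $0\le\partial_2 f(\phi_1(\xi+c\tau),\phi_1(\xi))\le\partial_2 f(0,0)$. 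For the weight ratio I would use $c\tau\ge 0$ and distinguish the cases $\xi+c\tau\le\xi_0$, $\xi\le\xi_0<\xi+c\tau$, and $\xi_0<\xi$: in the first case the ratio equals $e^{-\beta c\tau}\le 1$, in the second it equals $e^{\beta(\xi-\xi_0)}\le 1$, and in the third it equals $1$; hence $0<w(\xi+c\tau)/w(\xi)\le 1$ for every $\xi$. Combining these three bounds gives
\begin{equation*}
A_1(\mu,\xi)\ge B_1(\xi)-2\mu-\partial_2 f(0,0)(e^{2\mu\tau}-1)\ge C_1-2\mu-\partial_2 f(0,0)(e^{2\mu\tau}-1)=:g(\mu).
\end{equation*}

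It then remains to check that $g(\mu)>0$ on $(0,\mu_1)$ and that $\mu_1$ is well defined. One has $g(0)=C_1>0$; moreover $g$ is $C^1$ with $g'(\mu)=-2-2\tau\,\partial_2 f(0,0)\,e^{2\mu\tau}\le -2<0$, so $g$ is strictly decreasing, and $g(\mu)\to-\infty$ as $\mu\to+\infty$ (the term $-2\mu$ alone forces this when $\tau=0$ or $\partial_2 f(0,0)=0$, and the exponential term forces it otherwise). By the intermediate value theorem $g$ has a unique positive zero, which is precisely the $\mu_1$ defined in the statement, and $g(\mu)>0$ for $0<\mu<\mu_1$. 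Setting $C_3:=g(\mu)>0$ for such $\mu$ completes the proof.

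The only genuinely delicate point is the case analysis for the weight ratio $w(\xi+c\tau)/w(\xi)$ combined with the monotonicity/sign bookkeeping for $\partial_2 f$; the rest is elementary analysis of the scalar function $g$.
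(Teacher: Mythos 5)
Your proof is correct and follows essentially the same route as the paper: bound the weight ratio by $1$ and $\partial_2 f(\phi_1(\xi+c\tau),\phi_1(\xi))$ by $\partial_2 f(0,0)$ to reduce $A_1$ to the scalar function $C_1-2\mu-(e^{2\mu\tau}-1)\partial_2 f(0,0)$, and handle $A_2$ directly. You merely make explicit several steps the paper leaves implicit (the monotonicity of $\partial_2 f$ from $\partial_{ij}f\le 0$, the case analysis for $w(\xi+c\tau)/w(\xi)\le 1$, and the existence and uniqueness of the zero $\mu_1$), all of which check out.
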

\begin{proof}
Note that\ $\frac{w(\xi+c\tau)}{w(\xi)}\leq 1,$\ then for\ $0<\mu<\mu_{1}$, we have
\begin{eqnarray*}
A_{1}(\mu,\xi)&=&B_{1}(\xi)-2\mu
-\frac{w(\xi+c\tau)}{w(\xi)}\partial_{2}f\big(\phi_{1}(\xi+c\tau),\phi_{1}(\xi)\big)(e^{2\mu \tau}-1)\nonumber\\
&\geq& C_{1}-2\mu-(e^{2\mu \tau}-1)\partial_{2}f(0,0)\nonumber\\
&:=&C_{3}>0.
\end{eqnarray*}
For\ $0<\mu<\mu_{2}=\frac{C_{2}}{2}$, we have
$$A_{2}(\mu,\xi)=B_{2}(\xi)-2\mu\geq C_{2}-2\mu:=C_{4}>0.$$
The proof is completed.
\end{proof}

\vskip 5mm

\section*{Acknowledgement.}
  The authors sincerely thank the anonymous referees for very careful reading and for providing many inspiring and valuable comments and suggestions which led to great improvement in the earlier version of this paper.
This work is partially supported by NSFC Grants (nos. 11671071, 12071065 and 11871140).





\bibliographystyle{elsarticle-num}
\bibliography{<your-bib-database>}



\section*{References}

\end{document}